\documentclass[12pt]{amsart}  
\usepackage{amssymb,amsmath,amsthm,amscd}
\usepackage[all]{xy}

\addtolength{\textwidth}{95pt} \addtolength{\textheight}{32pt}
\addtolength{\hoffset}{-50pt} \addtolength{\voffset}{-15pt}




\numberwithin{equation}{section}


\newtheoremstyle{fancy1}{10pt}{10pt}{\itshape}{12pt}{\textsc\bgroup}{.\egroup}{8pt}{
}
\newtheoremstyle{fancy2}{10pt}{10pt}{}{12pt}{\itshape}{.}{8pt}{ }

\theoremstyle{fancy1}

\newtheorem{thm}[equation]{Theorem}

\newtheorem*{main*}{Theorem}

\newtheorem*{cor*}{Corollary}

\newtheorem*{problem*}{Problem}

\renewcommand{\thetable}{\theequation}
\setcounter{table}{\value{equation}}

\theoremstyle{fancy2}

\newtheorem*{rem*}{Remark}


\newcommand{\cref}[1]{Corollary~\ref{#1}}







\newcommand{\CP}{\mathbb{C\mkern1mu P}}
\newcommand{\HP}{\mathbb{H\mkern1mu P}}

\newcommand{\Sph}{\mathbb{S}}



\newcommand{\C}{{\mathbb{C}}}
\newcommand{\R}{{\mathbb{R}}}



\newcommand{\I}{\ensuremath{\operatorname{I}}}

\newcommand{\G}{\ensuremath{\operatorname{G}}}

\newcommand{\SO}{\ensuremath{\operatorname{SO}}}

\newcommand{\Sp}{\ensuremath{\operatorname{Sp}}}
\newcommand{\U}{\ensuremath{\operatorname{U}}}
\newcommand{\SU}{\ensuremath{\operatorname{SU}}}
\newcommand{\Spin}{\ensuremath{\operatorname{Spin}}}

\newcommand{\T}{\ensuremath{\operatorname{T}}}
\renewcommand{\S}{\ensuremath{\operatorname{S}}}




\newcommand{\fg}{{\mathfrak{g}}}
\newcommand{\fk}{{\mathfrak{k}}}

\newcommand{\fa}{{\mathfrak{a}}}

\newcommand{\fsu}{{\mathfrak{su}}}
\newcommand{\fu}{{\mathfrak{u}}}
\newcommand{\ft}{{\mathfrak{t}}}

\newcommand{\fsp}{{\mathfrak{sp}}}



\newcommand{\pro}[2]{\langle #1 , #2 \rangle}

\def\con#1=#2(#3){#1 \equiv #2 \bmod{#3}}

\newcommand{\ml}{\langle}                     
\newcommand{\mr}{\rangle}                    





\newcommand{\tr}{\ensuremath{\operatorname{tr}}}
\newcommand{\diag}{\ensuremath{\operatorname{diag}}}

\newcommand{\rk}{\ensuremath{\operatorname{rank}}}

\newcommand{\Ad}{\ensuremath{\operatorname{Ad}}}
\newcommand{\ad}{\ensuremath{\operatorname{ad}}}

\renewcommand{\sec}{\ensuremath{\operatorname{sec}}}









\newcommand{\spa}{\mbox{span}}

\newcommand{\bq}{/\!/}
\def\V{\mathcal{V}}

\newcommand{\mc}[1]{\mathcal{#1}}
\newcommand{\imag}{{\rm Im}}
\newcommand{\bb}[1]{\mathbb{#1}}
\newcommand{\HH}{\mathcal{H}}
\newcommand{\mf}[1]{\mathfrak{#1}}
\def\bsm{\begin{smallmatrix}}
\def\esm{\end{smallmatrix}}
\def\bpm{\begin{pmatrix}}
\def\epm{\end{pmatrix}}
\def\beq{\begin{equation}}
\def\eeq{\end{equation}}
\def\ms{\medskip\noindent}

\def\x{\times}

\begin{document}


\begin{titlepage}
\center{\Large{{ On
Eschenburg's Habilitation on Biquotients}}}\\[5cm]
\center{\Large{{ Lectures by Wolfgang Ziller }}}\\[1cm]
\center{\Large{{ University of Pennsylvania, 11 \& 13 Dec. 2006 }}}\\[1cm]
\center{\Large{{ Notes by Martin Kerin }}}\\[3cm]
\end{titlepage}

In 1984  Jost Eschenburg wrote his Habilitation on biquotients
\cite{E1}. This has been an important and influential paper which
has laid the foundation for the theory of biquotients in Riemannian
geometry, motivated by the Gromoll-Meyer paper from 1974 \cite{GM}
which showed that an exotic 7- sphere can be represented as a
biquotient. The new examples of positive curvature which appeared in
the Habilitation were later published in separate papers
\cite{E2},\cite{E3}. But in addition to the fact that  Eschenburg's
Habilitation is not easily accessible, the fact that it was written
in German has unfortunately prevented a general knowledge of its
full content. This motivated me to give two lectures at the
University of Pennsylvania at my secret seminar summarizing the
content of the Habilitation. I especially wanted to describe
explicitly the classification results and his tables on biquotients
of equal rank in an easily accessible form. This is potentially of
interest also outside of the subject of positive curvature. These
notes do not contain any material that cannot be found in his
Habilitation. A scanned copy of the Habilitation is available
on my home page
www.math.upenn.edu/$\sim$wziller/research.html

\section{Main Theorems}
Let $G$ be a compact Lie group with left-invariant metric $\langle
\; , \; \rangle$.  Let $K$ be the maximal subgroup of $G$ such that
$\langle \; , \; \rangle$ is right $K$-invariant.  We call $K$ the
invariance group of $\langle \; , \; \rangle$. Let $U$ be a closed
subgroup of $G \times G$, and let $U_L$, $U_R$ denote the projection
of $U$ onto the left and right factor of $G \times G$ respectively.
We assume that $U_R \subset K$, i.e. that the metric is
$U_R$-invariant.  Then $U$ acts isometrically on $G$ via
$$(u_L,u_R) \cdot g = u_L g u_R^{-1}\quad, \quad (u_L,u_R)\in U.$$
This action is free if and only if, for all $g \in G$, $g \neq e$,
we have $u_L \neq g u_R g^{-1}$. Notice also that the action is free
if and only if $u_L \neq g u_R g^{-1}$ for any $(u_L,u_R)$ in a
maximal torus in $U$. In this case $\langle \; , \; \rangle$ induces
a Riemannian metric on the quotient manifold $G \bq U$. The manifold
$G \bq U$ is called a biquotient. It is sometimes advantageous to only assume that the action is free modulo an ineffective kernel, i.e. if  $u_L = g u_R g^{-1}$ then $u_L=u_R$ lies in the center of $G$.
In the case of $SU(n)$ it is also sometimes convenient to describe the biquotient as a quotient of $SU(n)$ by $U\subset \U(n)\times\U(n)$ where the first and second component have the same determinant (although this can always be rewritten as an ordinary biquotient of $\U(n)$ as well).

 We mention that Ochiai-Takahashi
('76) showed that for any simple Lie group $G$ with left-invariant
metric $\langle \; , \; \rangle$,  the identity component of the
isometry group is contained in $ G \times G$, i.e. is of the form $
G \times K$, for some $K \subset G$. For non-simple Lie groups,
Ozeki showed that the same is true up to isometry. Thus any quotient
of a Lie group with a left invariant metric by a group of isometries
is of the above form.

We call a left invariant metric on $G$ {\it torus invariant} if it
is also right invariant under a maximal torus of $G$, i.e. $\rk K =
\rk G$.
\begin{thm}[Eschenburg]
\label{thm1} If $M^n = G \bq U$ admits a torus invariant metric with
positive curvature, then
$$\rk (G) =
\left\{ \begin{array}{cccc} \rk(U) & {\rm if } & n & \textit{ is even},\\
                           \rk(U) + 1 & {\rm if } & n & \textit{ is odd}.
\end{array} \right.$$
\end{thm}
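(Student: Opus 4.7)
The strategy is to exhibit an isometric torus action on $M = G \bq U$ of rank $\rk G - \rk U$, acting locally freely, and then to apply Berger's theorem: on a closed positively curved $M^n$ with $n$ even, every Killing field has a zero, so there is no locally free isometric circle. Two a priori upper bounds will also enter. First, freeness of the $U$-action forces $\rk U \le \rk G$: after conjugating $U$ in $G\times G$ so that a maximal torus $T_U$ of $U$ lies inside $T\times T$, freeness prevents $T_U$ from meeting the diagonal $\Delta T$ modulo the ineffective kernel, and hence $T_U$ projects almost injectively into $(T\times T)/\Delta T \cong T$. Second, a Hopf--Samelson-type Euler characteristic count for equal-rank biquotients gives $\chi(M)>0$ whenever $\rk U=\rk G$, forcing $n$ to be even; in particular, for $n$ odd one already has the strengthened bound $\rk U \le \rk G - 1$.

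For the lower bound I construct the torus. The torus invariance gives a maximal torus $T\subset K$ of $G$, so that $T\times T \subset G \times K$ acts on $G$ isometrically via $(t_1,t_2)\cdot g = t_1 g t_2^{-1}$ (left translation on one side, right translation on the other, the latter being isometric because $T\subset K$). After conjugating $U$ inside $G\times G$, which only replaces the metric by another torus-invariant one, I may arrange $T_U \subset T\times T$. The normalizer $N$ of $U$ inside $T\times T$ then descends to an isometric action of $N/(N\cap U)$ on $M$; using $T_U \subset N\cap U$ together with the freeness of the $U$-action on $G$, a dimension count produces an effective isometric torus $\hat T \hookrightarrow \Iso(M)$ of rank at least $\rk G - \rk U$ that acts on $M$ locally freely.

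I would then finish by invoking the curvature restrictions. If $n$ is even and $\rk\hat T \ge 1$, a locally free circle in $\hat T$ yields a nowhere-vanishing Killing field, contradicting Berger; therefore $\rk\hat T = 0$ and $\rk G = \rk U$. If $n$ is odd, the corresponding bound for locally free isometric torus actions on positively curved manifolds (in the spirit of Berger--Sugahara or Grove--Searle) is $\rk\hat T \le 1$, whence $\rk G - \rk U \le 1$, and combined with the strengthened upper bound $\rk U \le \rk G - 1$ one concludes $\rk G = \rk U + 1$.

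The main obstacle will be verifying the construction of $\hat T$: one must produce enough elements of $T\times T$ normalizing $U$ (modulo $U \cap T \times T$) to obtain rank at least $\rk G - \rk U$, and then translate the freeness condition $u_L \ne g u_R g^{-1}$ of the $U$-action on $G$ into an injectivity statement for the quotient torus on $M$ so that $\hat T$ is seen to act locally freely. This is the technical heart of the argument, since all the curvature input enters only after $\hat T$ has been built and shown to be fixed-point free.
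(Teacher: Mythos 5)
Your overall strategy --- build an isometric torus action of rank $\rk G-\rk U$ on $M$ and then invoke Berger's theorem --- is the natural generalization of the Hopf--Samelson/Berger argument for homogeneous spaces, but the step you yourself flag as ``the technical heart'' is a genuine gap, and part of it is false as stated. First, the normalizer $N$ of $U$ in $T\times T$ need not be large enough: an element of $T\times T$ centralizes $T_U$ and hence preserves each $\Ad(T_U)$-weight space of $\fg\oplus\fg$, but when a weight of $\fu$ occurs with multiplicity in $\fg\oplus\fg$ there is no reason for it to preserve the root spaces of $\fu$ themselves, so $N/(N\cap U)$ can fail to have rank $\rk G-\rk U$. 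Second, and more seriously, local freeness of $\hat T$ does not follow from freeness of $U$ on $G$: the point $gU$ is fixed by $(t_1,t_2)\in N$ as soon as $u_L^{-1}t_1=g\,(u_R^{-1}t_2)\,g^{-1}$ for some $(u_L,u_R)\in U$, a condition on the isotropy of the larger group $N\cdot U$ about which freeness of $U$ says nothing. Already for $\SU(2)\backslash\SU(3)$ the residual right $T^2$-action has circle subgroups with nonempty fixed point sets (take any $g$ conjugating a circle of $T$ into $\SU(2)$), so the action is not locally free; what Berger's theorem actually requires is only that the \emph{full} torus have empty fixed point set, and for the odd case one needs that no corank-one subtorus has a fixed point. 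Deriving these statements from $\rk U<\rk G$ alone is precisely the missing content.

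For comparison: these notes state the theorem without proof, but Section 3 is set up to deliver Eschenburg's actual argument, which is local and avoids isometries of the quotient entirely. After conjugating a maximal torus $T_U$ of $U$ into $T\times T$ by an element of $G\times K$ (an isometry of the setup, since the right factor can be chosen in $K$), one checks that the root spaces of $\fu$ lie in nonzero $\Ad(T_U)$-weight spaces of $\fg\oplus\fg$ and are therefore $Q$-orthogonal to $\ft\oplus\ft$; hence the projection of the vertical space $\V_e=\{X_L-X_R\mid (X_L,X_R)\in\fu\}$ onto $\ft$ equals $\{X_L-X_R\mid (X_L,X_R)\in\ft_U\}$, of dimension exactly $\rk U$ by freeness. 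Since $\ft$ is a $P$-invariant abelian subalgebra, criterion (N1) yields a zero-curvature plane as soon as $\dim(\ft\cap\V_e^{\perp})=\rk G-\rk U\ge 2$, so positive curvature forces $\rk U\ge\rk G-1$. Combined with your (correct) freeness bound $\rk U\le\rk G$ and the elementary parity identity $\dim G\equiv\rk G\pmod 2$ for compact Lie groups --- which gives $n\equiv\rk G-\rk U\pmod 2$ and replaces your appeal to the Euler characteristic --- the stated dichotomy follows, with no need for Berger's theorem or for any torus action on $M$.
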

\begin{thm}[Eschenburg]
\label{thm2} Suppose $G$ is simple and $ G \bq U$ is even
dimensional and admits a torus invariant metric with positive
curvature. Then $G \bq U$ is diffeomorphic to a homogeneous space or
$SU(3) \bq T^2$, where $T^2 = \{(diag(z,w,z w), diag(1,1,z^2 w^2))
\: | \: z,w \in S^1\}$.
\end{thm}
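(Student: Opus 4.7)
The plan is, first, to invoke Theorem~\ref{thm1}: since $G\bq U$ is even-dimensional, $\rk(U)=\rk(G)=:r$, so a maximal torus $T\subset U$ embeds as a full-rank subtorus $T\hookrightarrow T_G\times T_G$ via the projections $\pi_L,\pi_R$, where $T_G$ is a maximal torus of $G$ (after conjugating the two images into a common torus). The action of $T$ on $G$ is free modulo the center of $G$, and since every conjugacy class of $G$ meets $T_G$, this freeness becomes the combinatorial condition that for each nonzero non-central $(u_L,u_R)\in T$, the elements $u_L$ and $u_R$ do not lie in the same Weyl orbit in $T_G$. The classification thus reduces to the joint analysis of such lattice embeddings together with the positive curvature condition.

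Second, I would apply Eschenburg's sectional curvature formula for left-invariant metrics on $G$, combined with O'Neill's formula for the Riemannian submersion $G\to G\bq U$, specialized to torus-invariant metrics. For a two-plane spanned by root vectors $X_\alpha,X_\beta\in\fg$ with $[X_\alpha,X_\beta]\neq 0$, the curvature in $G\bq U$ reduces to an explicit expression in $\alpha,\beta$ and the images $\ft_L:=\pi_{L*}(\ft)$ and $\ft_R:=\pi_{R*}(\ft)$ inside $\ft_G$. Positivity on every such two-plane forces strong linear constraints: for each root pair $(\alpha,\beta)$ a specific linear combination of root evaluations on $\ft_L,\ft_R$ must be nonzero. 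Combined with freeness, this system becomes highly overdetermined once $r$ is larger than a small threshold.

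Third, I would enumerate by simple Lie group $G$. The Berger--Wallach list of even-dimensional positively curved homogeneous spaces, namely $\CP^n,\HP^n,\CaP,\SU(3)/T^2,\Sp(3)/\Sp(1)^3,F_4/\Spin(8)$, fixes the possible homogeneous targets. For each candidate $G$, I would enumerate the admissible equal-rank embeddings $T\subset T_G\times T_G$, check freeness, and test the curvature formula on the critical root two-planes. The case $G=\SU(3)$ is the richest: one finds, besides presentations of $\CP^2$ and of $\SU(3)/T^2$, exactly one genuinely new free equal-rank $T^2$-action, namely the one in the statement. For every other simple $G$ and every other admissible $T$, either the biquotient admits a homogeneous presentation or some critical two-plane has nonpositive curvature.

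The main obstacle is this last enumeration step: since $\CP^n$ already produces positively curved examples of arbitrarily large rank, one cannot cut off the analysis by $\rk G$ alone, but must in each remaining case genuinely exhibit a two-plane on which Eschenburg's curvature formula fails to be positive. The $\SU(3)\bq T^2$ example survives precisely because the two positive roots of $\SU(3)$ interact with the chosen embedding $T^2\hookrightarrow T_{\SU(3)}\times T_{\SU(3)}$ in exactly the right way to keep all critical two-plane curvatures strictly positive, and verifying that no analogous coincidence occurs in higher rank or for other simple groups is the technical heart of the theorem.
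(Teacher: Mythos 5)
Your overall architecture matches the paper's: reduce to $\rk(U)=\rk(G)$ via Theorem~\ref{thm1}, classify the free equal-rank torus actions through the conjugacy condition (checked on a maximal torus, where it becomes the Weyl-orbit condition you state), and then kill the non-homogeneous candidates by exhibiting flat planes. But there is a concrete gap in your step three: you enumerate and curvature-test only the embeddings of the \emph{torus} $T\subset T_G\times T_G$, whereas the theorem allows arbitrary equal-rank $U$. Freeness does reduce to the maximal torus, but the curvature analysis does not: for $T\subsetneq U$ the quotient map factors as $G\bq T\to G\bq U$, and this Riemannian submersion transports positive curvature \emph{downward}, not flat planes. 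So a flat plane in $G\bq T$ says nothing about $G\bq U$, and indeed the genuinely homogeneous survivors occur precisely at non-abelian $U$ (e.g.\ $\HP^{n-1}=\SU(2n)\bq(\Delta\SU(2)\times\SU(2n-1))$ from entry 2 of Table~\ref{cross}, whose underlying torus quotient is not positively curved). The paper closes this by classifying, for each free torus $T$, all equal-rank extensions $T\subset U\subset U_{\max}$ via Borel--Siebenthal (Tables~\ref{cross} and~\ref{notcross}) and running the obstruction separately for each such $U$, whose larger vertical space must be avoided when choosing the commuting horizontal vectors.

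A secondary weakness: your "explicit expression in $\alpha,\beta$" must produce a flat plane for \emph{every} torus-invariant metric, i.e.\ for every admissible metric tensor $P$, not for one normalized choice. This is exactly why the paper formulates the metric-independent sufficient conditions (N1)--(N3), which exploit only the facts that root spaces are automatically $P$-invariant and pairwise $Q$-orthogonal for any torus-invariant metric; a positivity computation for a fixed set of coefficients $\alpha_r$ would not close the argument. Finally, invoking the Berger--Wallach list inverts the logic of the conclusion: the theorem asserts the surviving quotients are diffeomorphic to homogeneous spaces, which the paper establishes by directly identifying each surviving $G\bq U$ (mostly as a rank-one symmetric space), not by matching against a list of positively curved homogeneous targets.
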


One easily sees  that
$$S^1_{p,q} = \{(diag(z^{p_1},z^{p_2},z^{p_3}), diag(z^{q_1},z^{q_2},z^{q_3})) \: |
\: z \in \S^1, \sum p_i = \sum q_i\}$$
acts freely on $SU(3)$ if and only if $(p_1 - q_{\sigma(1)}, p_2 -
q_{\sigma(2)}) = 1$ for all $\sigma \in \S_3$.  The resulting
biquotients $E^7_{p,q} := \SU(3) \bq \S^1_{p.q}$ are known as
Eschenburg spaces.
\begin{thm}[Eschenburg]
\label{thm3} Assume that  $G$ is semi-simple, $\rk(G) = 2$, and $
G\bq U$ is odd dimensional. If it admits a torus invariant metric
with positive curvature, then $ G\bq U$ is diffeomorphic to a
homogeneous space or $E^7_{p.q}$.
\end{thm}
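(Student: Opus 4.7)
The plan is to apply Theorem~\ref{thm1} and then carry out a case analysis on $G$. Since $n$ is odd and $\rk(G)=2$, Theorem~\ref{thm1} forces $\rk(U)=1$, so the identity component $U^0$ is either a circle, or one of $\SU(2)$, $\SO(3)$.

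For the non-abelian possibilities I would argue that a maximal torus $S^1\subset U^0$ acts freely on $G$ with positively curved torus-invariant quotient $G\bq S^1$, reducing to the circle analysis below. The quotient by the remaining three-dimensional symmetry then lands in a short list that one checks by hand to be diffeomorphic to a homogeneous space; this is the analogue, for $G$ merely semi-simple, of the corresponding step in Theorem~\ref{thm2}.

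This leaves the circle case $U=S^1$. Conjugating in $G\times G$ we may assume $U\subset T\times T$ for a fixed maximal torus $T$, so $U$ is parametrized by a pair $(X,Y)\in\ft\times\ft$. The compact semi-simple rank-two groups are $\SU(2)\times\SU(2)$, $\SU(3)$, $\Sp(2)$ and $G_2$, giving biquotients of dimensions $5$, $7$, $9$, and $13$ respectively. In each case the freeness condition becomes an explicit arithmetic condition on the Weyl-conjugates of $X$ and $Y$: for $G=\SU(3)$ it reduces exactly to the Eschenburg coprimality $(p_1-q_{\sigma(1)},p_2-q_{\sigma(2)})=1$ for all $\sigma\in\S_3$, identifying the biquotient with $E^7_{p,q}$.

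The heart of the proof is the obstruction from positive curvature. Writing out Eschenburg's submersion curvature formula for the torus-invariant metric, a horizontal $2$-plane on which the biinvariant curvature vanishes descends to a non-positively curved plane on the quotient unless the O'Neill $A$-tensor is non-zero there. For $G=\SU(2)\times\SU(2)$, $\Sp(2)$ and $G_2$ the goal is to produce such a persistent horizontal zero plane using commuting pairs of root vectors. The main obstacle will be carrying this out in the cases $\Sp(2)$ and $G_2$, where one must navigate the long-root/short-root structure and the Weyl group action to locate root vectors whose commutator lies in the horizontal distribution and is simultaneously annihilated by $X-\Ad(g)Y$ for a suitable $g\in G$. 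Once such a plane is found, freeness of the $S^1$-action forces $(X,Y)$ into a degenerate configuration where $X$ or $Y$ vanishes, making the biquotient homogeneous and completing the proof.
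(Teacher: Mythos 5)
Your skeleton---Theorem~\ref{thm1} forces $\rk(U)=1$, hence $G\in\{\S^3\times\S^3,\SU(3),\Sp(2),\G_2\}$ and $U\in\{\S^1,\SO(3),\SU(2)\}$, followed by a classification of the free rank-one actions and a case-by-case search for flat horizontal planes---is the right one, and for the circle on $\SU(3)$ your identification with $E^7_{p,q}$ via the coprimality condition matches the paper. But your reduction of the non-abelian case to the circle case is backwards, and it would break the proof at exactly the point where the real work lies. If $\S^1\subset U$ is a maximal torus, the horizontal distribution of $G\to G\bq\S^1$ is strictly \emph{larger} than that of $G\to G\bq U$, and $G\bq\S^1\to G\bq U$ is a Riemannian submersion; O'Neill's formula therefore gives $\sec_{G\bq U}\geq\sec_{G\bq\S^1}$ on horizontal lifts, so positive curvature of $G\bq U$ does \emph{not} imply positive curvature of $G\bq\S^1$. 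The paper's own examples make this concrete: Example~1 produces a flat plane at \emph{every} point of $\Sp(2)\bq\S^1$ for \emph{every} circle in $\Sp(2)\times\Sp(2)$, yet the Gromoll--Meyer biquotient $\Sp(2)\bq\Sp(1)$ has positive curvature at the identity coset (Example~3), and the homogeneous Berger space $\Sp(2)/\SU(2)$ is positively curved outright. Since the Gromoll--Meyer sphere is not diffeomorphic to any homogeneous space, ruling it out is not optional---if it admitted a positively curved torus-invariant metric the theorem would be false---and your reduction leaves that case (and likewise the $\SU(2)$-biquotient of $\G_2$ from the final classification theorem) entirely unhandled. Each non-abelian case needs its own flat-plane construction at a well-chosen point; for Gromoll--Meyer this is precisely the computation of Example~3, which exploits the orthogonality of the inequivalent $\Ad(U_R)$-modules $W_1,W_2,W_3$ and locates a $g$ at which a plane spanned by vectors of $W_1$ and $W_2$ becomes horizontal.

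A second, smaller defect is your closing claim that finding a flat plane ``forces $(X,Y)$ into a degenerate configuration where $X$ or $Y$ vanishes, making the biquotient homogeneous.'' That is not the logic of the argument. For $\Sp(2)$ the flat plane of Example~1 exists for every circle, one-sided or two-sided, so the conclusion there is simply that no $\Sp(2)\bq\S^1$ satisfies the hypothesis of the theorem; nothing degenerates to a homogeneous action. The correct contrapositive is: for every free rank-one action that is neither one-sided (hence homogeneous quotient) nor the Eschenburg circle on $\SU(3)$, one exhibits, for \emph{every} torus-invariant metric, a horizontal plane satisfying one of (N1)--(N3), so that such quotients never occur under the positive-curvature hypothesis.
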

Note that the hypotheses of Theorem \ref{thm3},
 together with Theorem \ref{thm1}, imply
  that $G \in \{\S^3 \times \S^3, \SU(3), \Sp(2), \G_2\}$ and
   $U \in \{\S^1, \SO(3), \SU(2)\}$.\\[.05cm]

   As explained in Eschenburg's published papers, the manifold $SU(3) \bq
   T^2$ and the biquotients $E^7_{p,q}$ with  $q_i
\not\in [\min\{p_j\}, \max\{p_j\}]$ for all $i = 1,2,3$, admit a
metric with positive
   curvature.

   \bigskip

Eschenburg's work was continued in odd dimensions in a Ph.D. Thesis
of Bock in 1995. This paper was also written in German. In addition
it was never published in any journal and we will hence shortly
describe its main result.

Consider the embedding $\Sp(2) \subset \SU(4) \subset \SU(5)$ given
by

$$
A \to \left(\begin{array}{cc|c} B & -\bar{C} &  \\ C & \bar{B} & \\
\hline & & 1\end{array}\right),
$$
where $A = B + jC$, and $B, C \in M_2 (\C)$.  Then let $\Sp(2)
\times \S^1_p \subset \U(5) \times \U(5)$ be the subgroup given by
$$\left\{
\left(\diag(z^{p_1}, \dots, z^{p_5}), \diag( A ,z)\right) \Bigg| \:
z \in \S^1, A \in \Sp(2), p = \sum p_i \right\}.$$ One easily sees that
 $\Sp(2) \cdot
\S^1_p := (\Sp(2) \times \S^1_p)/\{\pm(\I,e)\}$ acts freely on
$\SU(5)$ if and only if all of the $p_i$ are odd and $(p_{\sigma(1)}
+ p_{\sigma(2)}, p_{\sigma(3)} + p_{\sigma(4)}) = 2$ for all $\sigma
\in S_5$.  The biquotients $B^{13}_{p} := SU(5) \bq \Sp(2) \cdot
\S^1_p$ are known as Bazaikin spaces.

\begin{thm}[Bock]
\label{thm4} Suppose $M^{2n+1} = G \bq U$ admits a torus invariant
metric with positive curvature, where $U = H \cdot H'$ with $H$ of
rank one, $H'$ has no rank one factors, and $H'$ acts only
on
 one side. Then $G \bq U$ is diffeomorphic to a homogeneous space,
$E^7_{p,q}$ or $B^{13}_{p}$.
\end{thm}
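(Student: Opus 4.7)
The plan is to apply Theorem~\ref{thm1} to constrain ranks, case-split on $H'$, and then invoke Eschenburg's positive-curvature obstructions together with Dynkin's tables of subgroups to reduce to a short list.

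First I would note that since $M^{2n+1}$ is odd dimensional, Theorem~\ref{thm1} gives
$$\rk(G) \;=\; \rk(U) + 1 \;=\; \rk(H) + \rk(H') + 1 \;=\; \rk(H') + 2.$$
If $H'$ is trivial, then $\rk(G) = 2$ and Theorem~\ref{thm3} immediately yields that $G\bq U$ is diffeomorphic either to a homogeneous space or to $E^7_{p,q}$. So from now on I would assume $H'$ is nontrivial; since by hypothesis every simple factor of $H'$ has rank at least two, it follows that $\rk(G)\geq 4$.

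Next I would use the assumption that $H'$ embeds into one side of $G$ to organize a case analysis. Writing $G$ as a product of simple factors, each simple factor of $H'$ must project nontrivially to a simple factor of $G$ of at least the same rank, so the problem is essentially reduced to $G$ simple. Combined with the standing positive curvature obstruction --- which, via the results stated earlier in the excerpt, restricts $G$ to a short list of compact simple groups of small rank (the classical $\SU(n)$, $\Sp(n)$, $\Spin(n)$ with $n$ modest, together with $\G_2$, $\F_4$, etc.) --- this leaves only finitely many candidate pairs $(G,H')$. Dynkin's and Borel--de Siebenthal's classification of the (maximal) subgroups of the remaining $G$'s then enumerates the explicit embeddings $H' \hookrightarrow G$ that have to be considered.

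For each candidate pair $(G,H')$ I would next enumerate the possible rank-one $H$ completing $U = H\cdot H'$ such that the resulting action is free modulo the center, and so that some torus-invariant metric on $G$ descends to a positively curved metric on $G\bq U$. The freeness condition becomes a diophantine condition on the weights of $H\subset T_G\times T_G$, in the same spirit as the $(p_i-q_{\sigma(i)})=1$ conditions defining Eschenburg spaces and the odd/coprimality conditions $(p_{\sigma(1)}+p_{\sigma(2)},p_{\sigma(3)}+p_{\sigma(4)})=2$ defining Bazaikin spaces. The positive curvature condition is then tested via Eschenburg's flat-plane criterion for biquotients, the same criterion already used in establishing Theorems~\ref{thm1}--\ref{thm3}.

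The main obstacle is the final step: systematically ruling out every candidate pair except $G=\SU(5)$ with $H'=\Sp(2)$ embedded into one side through $\Sp(2)\subset\SU(4)\subset\SU(5)$ and $H=\S^1$ acting by weights $(p_1,\ldots,p_5)$ on the opposite side. For this exceptional pair, the freeness analysis reproduces precisely the congruence conditions characterising the Bazaikin spaces $B^{13}_p$, and verifying positive curvature recovers the familiar inequality on the $p_i$. All other candidates must be eliminated either because no free $H$-completion exists, because the resulting quotient collapses to a homogeneous space, or because Eschenburg's flat-plane obstruction produces a zero-curvature plane for every admissible torus-invariant metric --- the bulk of Bock's work lies in this final, case-by-case exclusion.
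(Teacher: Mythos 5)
Your outline coincides with the strategy the paper attributes to Bock: use the rank relation of Theorem~\ref{thm1} to force $\rk H' = \rk G - 2$, classify the resulting pairs $(G,H')$, then determine which rank-one groups $H$ can complete a free action on both sides, eliminating candidates along the way with the zero-curvature criteria. The paper itself only records this strategy (the detailed case-by-case exclusion is the content of Bock's thesis), so your proposal matches it at the same level of detail, with the honest acknowledgement that essentially all the work lies in the final enumeration.
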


The strategy in Bock's thesis is different. He does not classify the
tori $T$ which act freely on $G$ with $\rk T =\rk G-1$ since they
are too numerous. He uses the assumption that the group $H'$ is a
subgroup of $G$ with $\rk H'=\rk G -2$ and first classifies all such
pairs $(G,H')$. He then examines which further rank 1 groups can act
on both sides. He also uses along the way the 0-curvature criteria
explained in the next section to simplify the discussion.

\section{Torus-invariant metrics}

Let $T$ be a maximal torus in $G$ and $\langle \; , \; \rangle$  a
torus-invariant metric.
  Recall that each real representation of a torus
decomposes into 2-dimensional irreducible representations. In
particular, this applies to the adjoint representation of $T$ on
$\fg$.
 Therefore $\fg$ decomposes into the Lie algebra $\ft$ of $T$ and a
 sum of 2-dimensional representation modules of $T$, called {\it root
 spaces}.  The differential of the representation of $T$ on a root
 space $E$ looks as follows:\\
There is a linear form $r: \ft \to \R$ and a basis $X, Y$ for $E$,
such that for all $Z \in \ft$:

$$
[Z,X]  =  -r(Z)Y\quad , \quad [Z,Y]  =
 r(Z)X.
$$
 We denote the root space associated to each
root $r: \ft \to \R$ by $E(r)$.  Then on each $E(r)$, for all $Z \in
\ft$
$$\ad_Z = \left(\begin{array}{cc} 0 & r(Z) \\ -r(Z) & 0 \end{array}
\right),$$
i.e. $\ad_Z$ is skew-symmetric.\\
The root spaces $E(r)$ are inequivalent as $\Ad(T)$-representations.
 Therefore, with respect to any torus-invariant metric, the root
 spaces $E(r)$ and $\ft$ are pairwise orthogonal.  Hence, the
 metric is arbitrary on $\ft$, and on $E(r)$ has the form
 $\alpha_r Q|_{E(r)}$, where $Q$ is a bi-invariant metric on $G$ and
 $0 < \alpha_r \in \R$.\\[.2cm]
{\bf Example:}  $G = SU(3)$.  Then
$$\fsu(3) = \left(\begin{array}{ccc} \cdot & \Box_1 & \Box_2 \\
                                         & \cdot & \Box_3\\
                                         & & \cdot \end{array}\right),$$
where $\Box_i$, $i=1,2,3$, denote the root spaces, and $\ft = \{
\diag(a_1,a_2,a_3)\mid \sum a_i=0\}$.  Since $SU(3)$ is simple, a
bi-invariant metric is unique up to a multiple and we set
$Q(A,B)=-\frac12 \tr AB$. Thus, letting $Q$ be the Killing form,
every torus-invariant metric $\langle \; , \; \rangle$ on $SU(3)$
has the form
$$\langle
\; , \; \rangle= b + \alpha_1 Q|_{\Box_1} + \alpha_2 Q|_{\Box_2}
+ \alpha_3 Q|_{\Box_3},$$ where $b$ is an arbitrary metric on
$\ft$.

\section{Curvature of torus-invariant metrics}

The proof of Eschenburg's classification theorems consists of two
parts. In the first part one needs to classify all biquotients $G
\bq U$ with $G$ simple and $\rk(G)=\rk(U)$. In the second part one
needs to develop criteria for 0 curvature planes that can be applied
to each case. We start with the second more geometric part.

Let $\ml \; , \; \mr$ be a left invariant metric on $G$ and $Q$ a
fixed biinvariant metric. We define  the metric tensor $P\colon
\fg\to\fg$ by $\pro{X}{Y} = Q(X, P(Y))$ for $X,Y\in\fg$. We also
denote by $\ml \; , \; \mr$ the induced metric on $G \bq U$. We can
identify the vertical space $\V_g$ at $g\in G$ via left translations
with $d(L_{g^{-1}})_*(\V_g)$, which for simplicity we again denote
by $\V_g$.

\begin{thm}
The following are sufficient conditions for a zero curvature
 plane of the  metric $\ml \; , \; \mr$  on $G \bq U$ at a point
 $gU$:
\begin{enumerate}
\item[(N1)]
\label{N1} There exits a $P$-invariant abelian subalgebra $\fa$
and linearly independent $X,Y \in \fa$ which are perpendicular
to $\V_g$;

\item[(N2)]
\label{N2} There are $P$-invariant subspaces $W_1, W_2 \subset
\fg$ with $[W_1, W_2]=0$, and for some  linearly independent
 vectors $X \in W_1$, $Y \in W_2$, perpendicular to $\V_g$, we
 have
$[Y, P(Y)] \in W_2$;

\item[(N3)]
\label{N3} There is an $\Ad(K)$-invariant eigenspace $V$ of $P$
with $V \perp \fu_R$,  and for some  linearly independent
 vectors $X \in \fk$ and $Y \in V$, perpendicular to $\V_g$, we
 have
  $[P(X),Y]=0$.
\end{enumerate}
\end{thm}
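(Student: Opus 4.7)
My plan is to combine O'Neill's formula for the Riemannian submersion $\pi\colon G \to G\bq U$ with the standard formula for the sectional curvature of a left-invariant metric on $G$ expressed via the metric tensor $P$. O'Neill gives
\[
K_{G\bq U}(d\pi X, d\pi Y) \;=\; K_G(X,Y) + \tfrac{3}{4}\,\bigl\|[X,Y]^{\V}\bigr\|^2
\]
for horizontal lifts $X,Y$. After left translation to the identity, the horizontal space at $g$ is $\V_g^{\perp}\subset\fg$, and because the O'Neill correction is non-negative, producing a zero-curvature plane in the quotient amounts to showing (a) $K_G(X,Y)=0$ for the left-invariant metric on $G$, and (b) $[X,Y]$ has no $\V_g$-component.

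For (a), I would use Eschenburg's explicit formula expressing $\pro{R(X,Y)Y}{X}$ for a left-invariant metric as an algebraic combination of the brackets $[X,Y]$, $[PX,Y]$, $[X,PY]$, $[PX,PY]$ and the self-brackets $[X,PX]$, $[Y,PY]$, paired with one another under $Q$ (with suitable insertions of $P^{\pm 1}$). The key observation is that if sufficiently many of these brackets vanish, or land in mutually $Q$-orthogonal subspaces, then every term of the formula vanishes and $K_G(X,Y)=0$. Each of the hypotheses (N1), (N2), (N3) is designed to enforce exactly such a cancellation pattern; recall here that the invariance-group hypothesis $U_R\subset K$ implies that $P$ commutes with $\Ad(K)$, so that $\Ad(K)$-invariant subspaces of $\fg$ are automatically $P$-invariant and $P$ commutes with $\ad_X$ for $X\in\fk$.

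Concretely: in (N1), the $P$-invariant abelian subalgebra $\fa$ contains $X,Y,PX,PY$, so every bracket above vanishes, and $[X,Y]=0$ kills the O'Neill term as well. In (N2), $P$-invariance of $W_1,W_2$ gives $PX\in W_1$, $PY\in W_2$, and $[W_1,W_2]=0$ kills the four ``mixed'' brackets; the hypothesis $[Y,PY]\in W_2$ (together with $X\in W_1$) kills the remaining nested commutator $[X,[Y,PY]]$ appearing in the formula, and again $[X,Y]=0$ handles the O'Neill correction. In (N3), the $\Ad(K)$-invariance of the $P$-eigenspace $V$ together with $X\in\fk$ gives $[X,V]\subset V$ and $P[X,Y]=[X,PY]$ for $Y\in V$; combined with the hypothesis $[PX,Y]=0$ and the fact that $P|_V$ is a scalar, this collapses every relevant term of the curvature formula to zero, while $[X,Y]\in V\perp \fu_R$ handles the $\fu_R$-component of (b).

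The main obstacle is step (b) in case (N3). In (N1) and (N2) we have $[X,Y]=0$ outright. In (N3) we only know $[X,Y]\in V$, which is orthogonal to $\fu_R$ but not a priori to the other piece $\Ad(g^{-1})\fu_L$ of $\V_g$. To conclude, one must use the horizontality assumptions $X,Y\perp\V_g$ together with the $\Ad(K)$-equivariance of $P$ (and the embedding $U_R\subset K$, so that the vertical group of the submersion sits inside the invariance group of the metric) to force $\pro{[X,Y]}{\V_g}=0$. Carefully separating the role of $\fk$ as invariance group of $\pro{\cdot}{\cdot}$ from that of $\fu_R$ as vertical subalgebra is the most delicate point of the argument.
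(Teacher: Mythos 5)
Your high-level strategy coincides with the paper's: O'Neill's formula for the submersion $G\to G\bq U$ combined with an explicit bracket formula for the sectional curvature of the left-invariant metric (the paper uses P\"uttmann's formula, which is exactly the kind of expression you describe), with each of (N1)--(N3) arranged so that all terms cancel. Your part (a) is essentially the paper's argument. The genuine gap is in part (b), your treatment of the O'Neill term. That term is $\tfrac34\|[A,B]^{\V}(g)\|^2$ where $A,B$ are \emph{horizontal} vector fields extending the given horizontal vectors; it is tensorial in those vectors, but it is \emph{not} the vertical part of the bracket $[a,b]$ of the \emph{left-invariant} extensions, because $[A,B](g)-[a,b](g)$ picks up derivative terms from the non-horizontal differences $A-a$ and $B-b$. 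Hence reducing (b) to ``$[X,Y]$ has no $\V_g$-component'' is not valid, and in particular the inference ``$[X,Y]=0$ kills the O'Neill term'' does not stand on its own. The paper addresses this by deriving
$$z(a,b;g) = \max_{X \in \fu}\Bigg|\frac{\pro{\Ad_{g^{-1}} X_L}{\mc{L}(a,b)} - \pro{X_R}{[a,b]}}{|X^*(g)|}\Bigg|,\qquad \mc{L}(a,b) = (\ad_a)^*(b) - (\ad_b)^*(a) - [a,b],$$
with $(\ad_a)^{*}=-P^{-1}\circ\ad_a\circ P$, for the norm of the vertical part of the bracket of the horizontal extensions.

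Only through this corrected formula do the hypotheses do their work. In (N1) and (N2) the $P$-invariance of $\fa$ (respectively of $W_1,W_2$) is needed not just to kill terms in the curvature formula on $G$ but precisely to force $[X,PY]=[PX,Y]=0$ and hence $\mc{L}(X,Y)=0$ in addition to $[X,Y]=0$; your write-up attributes the vanishing of the O'Neill term solely to $[X,Y]=0$. In (N3), where $[X,Y]$ need not vanish at all, one must compute that $\mc{L}(X,Y)$ is a multiple of $[X,Y]\in V$ (using that $V$ is an $\Ad(K)$-invariant eigenspace of $P$ and that $[PX,Y]=0$) and then use $V\perp\fu_R$ together with the horizontality of $X$ and $Y$ to annihilate both pairings in the numerator of $z$. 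You correctly identify this as the delicate point but explicitly leave it unresolved, so the proposal as written does not establish (N3).
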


\begin{proof}
We use the O'Neill formula:
$$
\sec_{G\bq U}(x, y)=\sec_G(\bar x,\bar y)+\tfrac 3 4 ||\; [X,Y]^\V\; ||^2 ,
$$
where $x,y$ are orthonormal horizontal vectors, $\bar x,\bar y$
their horizontal lift,   $X,Y$  horizontal vector fields extending
$\bar x,\bar y$, and $[X,Y]^{\V}$ denotes the vertical part of
$[X,Y]$.

For the curvature of the left invariant metric $\sec_G(\bar x,\bar
y)$ we use a formula of P\'uttmann:
\begin{align*}\label{curv}
\langle R(X,Y)Y,X)\rangle =& \tfrac 1 2
Q([PX,Y]+[X,PY],[X,Y])-\tfrac 3 4 Q(P[X,Y],[X,Y]) \\\notag
 &+ Q(B(X,Y),P^{-1}B(X,Y)) - Q(B(X,X),P^{-1}B(Y,Y))\ ,
\end{align*}
where $B (X,Y) = \tfrac{1}{2} ([X,PY] - [PX,Y])$.

For the O"Neill term $\big\| [\bar{X}, \bar{Y}]^\V \big\|^2$, one
needs to develop a formula as well. Let $a,b \in \fg$ be
left-invariant vector fields and  $A,B$ be horizontal vector fields
(with respect
  to $G \to G \bq U$) such that $A(g) = a(g)$ and $B(g) = b(g)$.  Define
   $z(a,b;g):=\|[A,B]^\V (g)\|$.  Let $\fu$ be the Lie algebra of $U$.
   The adjoint, $(\ad_a)^*$, of $\ad_a$ with respect to the left-invariant
   metric $\langle \; , \; \rangle$ is given by $(\ad_a)^* = - P^{-1} \circ \ad_a
    \circ P$.  If we define $\mc{L}(a,b) = (\ad_a)^*(b) - (\ad_b)^*(a)
     - [a,b]$, one shows that
$$z(a,b;g) = \max_{X \in \fu}
\Bigg|\frac{\pro{\Ad_{g^{-1}} X_L}{\mc{L}(a,b)} -
\pro{X_R}{[a,b]}}{|X^*(g)|} \Bigg|,$$ where $X^*$ is the action
field of $X = (X_L, X_R) \in \fu$ on $G$.

 Thus, if $a,b \in
\fg$ span a horizontal $2$-plane $\sigma_g \subset T_g G$ with
$\sec_G (a,b) = 0$ and $z(a,b; g) = 0$, then $\sigma_g$ projects to
a zero curvature plane at $g\cdot U \in G \bq U$.
 If $\pro{\:}{}$ is bi-invariant, then $\sigma_g$ has zero curvature if and only if $[a,b] = 0$.

Using all of the above, one now easily verifies the criteria
(N1)-(N3).
\end{proof}

\section{Examples}

In each of the following examples we assume that $G$ is equipped
with a torus-invariant metric, i.e. the invariance group $K$
contains a maximal
torus.\\[.3cm]
{\bf Example 1:}  There is a zero curvature plane at every point of
$\Sp(2) \bq \S^1$ for any circle $\S^1\subset \Sp(2)\x \Sp(2)$. To
see this,  consider the following subalgebras of $\fsp(2)$:
\begin{eqnarray*}
\ft &=& \{\diag(i \alpha, i \beta) \: | \: \alpha, \beta \in \R\},\\
V_1 &=& \{\diag(a_1 j + a_2 k, 0) \: | \: a_1, a_2 \in \R\},
\textrm{ and}\\
V_2 &=& \{\diag(0, b_1 j + b_2 k) \: | \: b_1, b_2 \in \R\}.
\end{eqnarray*}
$V_1$ and $V_2$ are root spaces, are hence $P$-invariant, and
satisfy $V_1 \perp V_2$ and $[V_1, V_2] = 0$.  Also, $[V_i, V_i]
\subset V_i$, $i=1,2$, since $V_1$ and $V_2$ are subalgebras.  For
all $g \in \Sp(2)$, $\V_g$ is one-dimensional and so we may find
horizontal $X \in V_1$ and $Y \in V_2$, and $X \perp Y$.  Thus by
applying (N2), we see that $\spa\{X,Y\}$ projects to a zero
curvature plane at $g \cdot \S^1 \in \Sp(2)
\bq \S^1$.\\[.3cm]
{\bf Example 2:}  Eschenburg showed that on $\! E^7_{p,q}$ there
exists a special metric which has positive curvature  if $q_i
\not\in [\min\{p_j\}, \max\{p_j\}]$ for all $i = 1,2,3$.  We will
now see that if $q_i \in [\min\{p_j\}, \max\{p_j\}]$, then
$E^7_{p,q}$ has a zero-curvature plane at some point for {\it any}
torus invariant metric. Since the other cases are similar, we assume
that $q_3 \in [\min\{p_j\}, \max\{p_j\}]$. Using the same method
that Eschenburg used to construct a metric of positive curvature in
his examples, one sees that there is a $g \in \SU(3)$ such that
$Q(\Ad_{g^{-1}} X_L - X_R, Y_3) = 0$, where
 $X = (X_L, X_R) \in \fu$, and $Y_3 = i \: \diag(1,1,-2)$.  Define $Y:= P^{-1} (Y_3)$.\\
Let
$$V_1 := \left\{\left(\begin{array}{cc|c}0 & x & 0 \\
                                        -\bar{x} & 0 & 0\\ \hline
                                        0 & 0 & 0
                      \end{array}\right) \Bigg| \:  x \in \C \right\}.$$
Then, since $V_1$ is two-dimensional, we can find $X \in V_1$ such that $X$ is horizontal at $g$.
  Since $V_1$ is a root space, we have $\Ad(K) V_1 = \Ad(T) V_1 \subset V_1$.  We also have
  $0 = [X, Y_3] = [X, P(Y)]$, and so we may apply (N3) to show that $E^7_{p,q}$ has
  a zero-curvature plane at $g \cdot \S^1_{p,q}$.\\[.3cm]
{\bf Example 3:}  We will now show that the Gromoll-Meyer sphere
$\Sigma^7 := \Sp(2) \bq \Sp(1)$ has positive curvature at a point,
but does not have positive curvature everywhere.  Here we take
$\Sp(1) = \{(\diag(q,q)\; , \;  \diag(q,1)) \: | \: q \in \Sp(1)\}
\subset \Sp(2) \times \Sp(2)$. Consider the identity $\I \in
\Sp(2)$.  We claim that $\Sigma^7$ has positive curvature at $\I
\cdot \Sp(1)$.  The vertical subspace at $\I$ is given by
$$\V_{\I} = \left\{ \left(\begin{array}{cc}0 & 0 \\ 0 &
x \end{array}\right) \big| \: x \in \imag (\bb{H}) \right\}
 \subset \fsp(2).$$
Hence the horizontal subspace at $\I$ is given by
$$\HH_{\I} = \left\{ \left(\begin{array}{cc} y & v \\ -\bar{v} & 0
\end{array}\right) \big| \: y \in \imag (\bb{H}), v \in \bb{H}
\right\} \subset \fsp(2),$$ which coincides with the horizontal
space $\mf{H}_{\I}$ at $\I$ of
 the homogeneous space $\Sph^7 = \Sp(2)/\Sp(1)$, where in this case
 $\Sp(1) = \{\diag(1,q) \: | \: q \in \Sp(1) \} \subset \Sp(2)$.
 Now, since $\Sph^7$ has positive curvature, any vectors
 $X,Y \in \mf{H}_{\I}$ such that $[X,Y]=0$ must therefore be
 linearly dependent.  Hence there are no horizontal zero-curvature
 planes at $\I$, and so $\Sigma^7$, with the metric induced by a biinvariant metric,
 has non negative curvature and has positive curvature at
 $\I \cdot \Sp(1)$.\\
We now show that for any $U_R$-invariant metric on $\Sp(2)$,
$\Sigma^7$ has a plane of zero-curvature at some point, where $U_R =
\{\diag(q,1) \: | \: q \in Sp(1)\}$.  Consider the subspaces $W_1,
W_2, W_3 \subset \fsp(2)$, where
\begin{eqnarray*}
W_1 &:=& \left\{ \left(\begin{array}{cc} x & 0 \\ 0 &
0 \end{array}\right) \bigg| \: x \in \imag (\bb{H}) \right\},\\
W_2 &:=& \left\{ \left(\begin{array}{cc} 0 & 0 \\
0 & y \end{array}\right) \bigg| \: y \in \imag (\bb{H})
 \right\}, \;\; \textrm{and}\\
W_3 &:=& \left\{ \left(\begin{array}{cc} 0 & v \\
 -\bar{v} & 0 \end{array}\right) \bigg| \: v \in \bb{H} \right\}.
\end{eqnarray*}
$\Ad(U_R)$ acts on $W_1$, $W_2$ and $W_3$ by $\{x \to qxq^{-1}\}$,
$\{{\rm id}\}$ and $\{v \to qv\}$ respectively.
 Then $W_1$, $W_2$ and $W_3$ are clearly inequivalent
 $\Ad(U_R)$-representations, and by Schur's Lemma, are pairwise
 orthogonal.  We remark that the metric on $W_2$ can be arbitrary,
 since $\Ad(U_R)$ acts trivially.\\
The vertical subspace at $g \in \Sp(2)$ is given by
$$\V_g = \left\{ \Ad_{g^{-1}} \left(\begin{array}{cc} x & 0 \\
0 & x \end{array}\right) -  \left(\begin{array}{cc} x & 0 \\ 0 & 0
\end{array}\right) \bigg| \: x \in \imag(\bb{H}) \right\}.$$
Let $g = \frac{1}{2}\left(\begin{array}{cc} 1 & i \\ i & 1
\end{array}\right)$.  Then
$$\V_g = span\left\{ \left(\begin{array}{cc} 0 & 0 \\ 0 & i \end{array}\right),
\left(\begin{array}{cc} j & k \\ k & 0 \end{array}\right),
\left(\begin{array}{cc} -k & j \\ j & 0 \end{array}\right)
\right\}.$$ Let $X = \left(\begin{array}{cc} i & 0 \\ 0 & 0
\end{array}\right) \in W_1$ and $Y = \left(\begin{array}{cc} 0 & 0
\\ 0 & b \end{array}\right) \in W_2$, where $b \perp i$.  Therefore
$X,Y \in \HH_g$.  Applying (N2) we see that
$\sigma = \spa\{X,Y\}$ projects to a zero-curvature plane at $g \cdot \Sp(1) \in \Sigma^7$.\\[.3cm]
{\bf Example 4:}  For any $U_R$-invariant metric on $\SO(2n+1)$, the
biquotient $M := \Delta \SO(2) \backslash \SO(2n+1) / \SO(2n-1)$ has
a zero-curvature plane at every point.  Here
$$\Delta SO(2) = \left\{\left(\begin{array}{cccc} A & & & \\
                                                    & \ddots & & \\
                                                    & & A & \\
                                                    & & & 1 \end{array}\right) \Bigg| \: A \in
                                                    \SO(2)\right\} \subset \SO(2n+1).$$
Note that $\rk(\SO(2n+1)) = \rk( \Delta \SO(2) \times \SO(2n-1))$,
and also that $M$ is, in fact, the quotient of the unit tangent
bundle, $T_1 \Sph^n$, of $\Sph^n$,
and the action of $\Delta SO(2)$ on $T_1 \Sph^n$ is the geodesic flow.\\
Let $V,W = \R^{2n-1}$.  Then we may write
$$\mf{so}(2n+1) = \left\{\left(\begin{array}{c|cc}  &  &  \\
                                                \mf{so}(2n-1) & x & y\\
                                                     &   &  \\
                                   \hline  -x^t & 0 & a \\
                                           -y^t & -a & 0
                               \end{array}\right)
\Bigg| \: x \in V, y \in W, a \in \R \right\}.$$
Then $\mf{so}(2n-1)^\perp = V \oplus W \oplus \R$.  Now
$\Ad(SO(2n-1))$ acts on $\mf{so}(2n-1)^\perp$ via
$$\Ad_A (x,y,a) = (Ax, Ay, a).$$
Since $V \perp W$, we may choose $g$ such that $\Ad_g V \perp \Ad_g
W$.  Now, all $\Ad(\SO(2n-1))$-invariant subspaces are of the form
$\Ad_g V$, for some $g$ of the form
$$\left(\begin{array}{c|cc} \I & & \\
                           \hline  & a & b\\
                           & c & d
                           \end{array}\right) \in \SO(2).$$
Noting that $[V,W] = 0$, we now choose $X \in \Ad_g V$, $Y \in \Ad_g W$ such that
 $[X,Y] = 0$.  Note also that $P(\Ad_g V) \subset \Ad_g V$ and $P(\Ad_g W) \subset \Ad_g W$.\\
Then for all $h \in \SO(2n+1)$ we use the fact that $\Delta \SO(2)$
is one-dimensional
 to choose $X,Y$ as above such that $X,Y \perp \V_h := \Ad_{h^{-1}}
 (\Delta \mf{so}(2)) - \mf{so}(2n-1)$.

We may now apply (N1) to see that there is thus a zero-curvature
plane at every point of $\Delta \SO(2) \backslash \SO(2n+1) /
\SO(2n-1)$.

 Note that Wilking ('02) equipped $\Delta \SO(2)
\backslash \SO(2n+1) / \SO(2n-1)$ with a metric of almost positive
curvature, i.e. a metric which has positive curvature on an open
dense set of points.

\bigskip

\section{Classification of biquotient  actions by a maximal torus}

\bigskip

A major achievement in Eschenburg's Habilitation is the
classification of all biquotients $G \bq U$ where $G$  is simple and
$\rk(G) = \rk(U)$. This is based on first classifying all maximal
tori which act freely on $G$, which we will describe now.

\bigskip

We start with free biquotient actions on $\SU(n)$.

\begin{thm}
\label{SU(n)tori}
    For $n \geq 3$ a torus $\T^{n-1} = \ml z,w_1, \dots, w_{n-2}\mr$
    acts freely on $SU(n)$ if and only if it either acts on one side, or
     $\T^{n-1}$ is
    conjugate to  $\S_{1,\ell}$ or $\S_{2,\ell}$ for some
    $1 \leq \ell \leq \frac{n}{2}$, where
$$
\S_{1,\ell}^{n-1}=\big\{\diag((z^2,\dots , z^2,1,\dots ,1) \, ;\,
\diag((z \bar w_1 \ldots \bar w_{n-2}) , z^2 w_1,
\ldots,z^2 w_{\ell - 1}, w_\ell ,\ldots, w_{n-2},z )\big\}
$$
$$
\S_{2,\ell}^{n-1}=\big\{\diag(1,\dots ,1,z^2) \, ;\,
\diag((z\bar w_1 \dots \bar w_{\ell-1}) ,  w_1,
\ldots,w_{n-2}, (z \bar w_\ell \dots \bar w_{n-2}) )\big\}
$$
The actions of $\S_{1,\ell}$ and $\S_{2,\ell}$ are equivalent if and
only if $\ell = 1$.
\end{thm}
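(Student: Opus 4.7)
The plan is to translate the freeness condition into a finite combinatorial problem about a pair of integer matrices. Since every maximal torus in $\SU(n)\times\SU(n)$ is conjugate to $T_{\max}\times T_{\max}$, with $T_{\max}$ the standard diagonal torus, after conjugating independently on the two sides we may assume $T^{n-1}\subset T_{\max}\times T_{\max}$. As explained in the introduction, it is simplest to allow the embedding into $\U(n)\times\U(n)$ subject to the equal-determinant condition. Choosing the parameters as $(z,w_1,\ldots,w_{n-2})$, the inclusion is encoded by a pair of integer matrices $A,B\in M_{n\times(n-1)}(\Z)$ whose $i$-th rows record the weights of the $i$-th diagonal entry of the left and right projections respectively, subject to the constraint that $A$ and $B$ have equal row sums. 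A conjugate of $u_R\in T_{\max}$ inside $\SU(n)$ meets $T_{\max}$ exactly in its Weyl orbit, so the action is free if and only if, for every $\sigma\in S_n$, the lattice map $A-\sigma\cdot B\colon \Z^{n-1}\to\{v\in\Z^n:\sum v_i=0\}$ (in which $\sigma$ permutes the rows of $B$) induces an isomorphism on the associated tori, equivalently has determinant $\pm 1$.

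I would then classify pairs $(A,B)$ modulo the natural equivalences: independent row permutations on $A$ and on $B$ coming from the two Weyl groups, and $\mathrm{GL}(n-1,\Z)$ column operations corresponding to a change of basis in $T^{n-1}$. The trivial cases $A=0$ and $B=0$ are precisely the one-sided actions and can be set aside. Otherwise I would stratify the analysis by the rank of the left projection $p_L\colon T^{n-1}\to T_{\max}$. If $p_L$ is surjective, identify $T^{n-1}$ with $T_{\max}$ via $p_L$, so that $A$ becomes a standard identity configuration and $B$ is an arbitrary integer homomorphism to be determined by the freeness conditions. If $p_L$ is not surjective, then $\ker p_L$ is a positive-dimensional subtorus acting only by right multiplication, which must inject into $T_{\max}$ via $p_R$; this forces a rigid block structure on $B$ that is then played off against the left factor.

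The combinatorial heart of the argument, which I expect to be the main obstacle, is the enumeration of matrices satisfying the full system of $n!$ determinant conditions. A priori many weight patterns are possible, and one must rule out all but the two listed families by exhibiting, in each excluded configuration, a permutation $\sigma$ for which $A-\sigma\cdot B$ fails to be unimodular. I would organise this by induction on $n$, together with the observation that if the entries in some row of $A-\sigma\cdot B$ share a common factor then freeness is destroyed; this forces $A$, after the permissible symmetries, into the shape $\diag(z^2,\ldots,z^2,1,\ldots,1)$ with $\ell$ copies of $z^2$. Given such an $A$, the remaining unimodularity conditions determine $B$ up to a relabelling of $w_1,\ldots,w_{n-2}$, producing exactly the two families $S_{1,\ell}$ and $S_{2,\ell}$. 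The restriction $1\le\ell\le n/2$ comes from the involution $z\mapsto z^{-1}$, which swaps the $\ell$ copies of $z^2$ with the $n-\ell$ copies of $1$; the equivalence $S_{1,1}\sim S_{2,1}$ then follows from a Weyl reflection combined with a relabelling of the $w_i$, whereas for $\ell\ge 2$ the multiplicity pattern of $A$ is invariant under all the symmetries and distinguishes the two families.
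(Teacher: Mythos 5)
Your reduction of freeness to a lattice--theoretic condition is correct and is the natural way to begin: conjugate the two factors independently into the diagonal maximal torus, record the two projections by integer weight matrices $A,B$, and observe that freeness is equivalent to each $n\times(n-1)$ matrix $A-\sigma B$ having rows that generate $\Z^{n-1}$, for every $\sigma$ in the Weyl group $S_n$. (Two small slips: with your convention that rows index diagonal entries, the equal--determinant constraint is that $A$ and $B$ have equal \emph{column} sums; and the involution giving the bound $\ell\le n/2$ is not $z\mapsto z^{-1}$ alone but $z\mapsto\bar z$ combined with multiplying both $u_L$ and $u_R$ by the central scalar $z^2$, which leaves the biquotient action unchanged.) Note that the paper states this theorem without proof, as a citation of Eschenburg's Habilitation, so there is no argument in the text to compare against; your proposal has to stand on its own.

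It does not, because the combinatorial classification --- which is the entire content of the theorem --- is asserted rather than carried out. Your one concrete tool, that a row of $A-\sigma B$ with a common factor $d>1$ (or a zero row) destroys unimodularity, only yields that every difference $\chi_i-\psi_j$ of a left weight and a right weight must be a primitive (in particular nonzero) vector of $\Z^{n-1}$. That is a genuine necessary condition, but it comes nowhere near forcing $A$ into the shape $\diag(z^2,\dots,z^2,1,\dots,1)$: you have not shown that the left projection must be one--dimensional (a priori it can have any rank up to $n-1$), nor why the left weights must differ by exactly $2$ rather than by $1$ or $3$, nor --- given such an $A$ --- why the remaining $n!$ unimodularity conditions admit exactly the two families $\S_{1,\ell}$ and $\S_{2,\ell}$ for $B$ and nothing else. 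The proposed induction on $n$ is not set up: there is no base case and no mechanism for passing from $n$ to $n-1$ that preserves the freeness conditions. These are precisely the points where the actual proof must do work, so what you have is a correct translation of the problem followed by a restatement of its answer.
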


\ms {\it Remark 1.}  Recall that we must have $\det(u_L) =
\det(u_R)$, where $(u_L, u_R) \in U = \S_{i,\ell}$, $i=1,2$.  Thus
in $\S_{1,\ell}$ there are $\ell$ copies of $z^2$ on the left-hand
side.

\ms {\it Remark 2.}  If $n = 2m$, we may rewrite  the actions in
Theorem \ref{SU(n)tori} with $\ell=m$ as

$$
\S_{1,m}^{2m-1}=\big\{\diag((z,\dots , z,\bar z,\dots ,\bar z) \, ;\,
\diag((\bar w_1 \dots \bar w_{n-2}) ,  w_1,
\dots, w_{n - 2},1 )\big\}
$$
$$
\S_{2,m}^{2m-1}=\big\{\diag(z,\dots ,z,z^{n-1}) \, ;\,
\diag((\bar w_1 \dots \bar w_{m-2}) ,  w_1,
\ldots,w_{n-2}, (\bar w_m \dots \bar w_{n-2}) )\big\}
$$

\ms {\it Remark 3.}    Note that in the biquotient actions  on
$\SU(n)$ there is only one $\S^1$ which acts on
both sides of $\SU(n)$.\\[.5cm]

\begin{thm}
\label{Sp(n)tori} For $n \geq 3$ an $n$-torus $\T^n = \ml z, w_1,
\dots, w_{n-1} \mr$ acts freely on  $\Sp(n)$  if and only if it
either acts on one side, or
     $\T^{n}$ is
    conjugate to  $P_1^n$ or $P_2^n$, where

$$
P_1^n=\big\{\diag((1,\dots , 1, z) \, ;\,
\diag(w_1,
\dots, w_{n - 1},(\bar w_1 \dots \bar w_{n-1}) )\big\}
$$
$$
P_2^n=\big\{\diag(z,\dots ,z) \, ;\,
\diag( w_1,
\ldots,w_{n-1}, 1 )\big\}
$$
The actions of $P_1^n$ and $P_2^n$ are equivalent if and only if $n
= 2$.
\end{thm}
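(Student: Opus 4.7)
The plan is to follow the same strategy used in the proof of Theorem~\ref{SU(n)tori}, exploiting the richer Weyl group of $\Sp(n)$, namely $W = S_n \ltimes (\Z/2)^n$. First I would encode a rank-$n$ subtorus $T^n \subset T^n_L \times T^n_R$ of the maximal torus of $\Sp(n) \times \Sp(n)$ by a pair of integer weight matrices $A, B \in M_n(\Z)$: a coordinate $t = (t_1,\dots,t_n) \in T^n$ maps to $(u_L, u_R)$ with $(u_L)_i = \prod_j t_j^{A_{ij}}$ and analogously for $u_R$ with $B$. The equivalences preserving the action up to conjugation are (i) left multiplication of $A$ and of $B$ by $W$ independently, corresponding to conjugation in $\Sp(n) \times \Sp(n)$, and (ii) right multiplication of the stacked matrix $(A\,|\,B)$ by $GL_n(\Z)$, corresponding to reparametrizing $T^n$.

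Next I would translate freeness into a combinatorial condition. Since two elements of the maximal torus of $\Sp(n)$ are conjugate in $\Sp(n)$ precisely when they lie in the same $W$-orbit, the $T^n$-action on $\Sp(n)$ is free modulo the at most $\Z/2$ ineffective kernel if and only if, for every $w \in W$, the integer matrix $M_w := A - wB$ satisfies $|\det M_w| \in \{1, 2\}$, the value $2$ being permitted only for those $w$ whose corresponding kernel element is central. This reduces the problem to a finite combinatorial question indexed by the $2^n n!$ elements of $W$.

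The heart of the proof is then a classification. If one of $A$, $B$ is zero we are in the ``acts on one side'' case. Otherwise, using the $W$-action on $B$ together with $GL_n(\Z)$ on $T^n$, I would normalize $B$ to a canonical form and then, imposing $|\det(A - wB)| \leq 2$ across all $w$, deduce that $A$ must have a very restricted shape. I expect the analysis to yield, after the allowed equivalences, exactly the two normal forms $P_1^n$ and $P_2^n$. Verifying that each of these is genuinely free is then a direct calculation: for each Weyl element $w = (\sigma, \epsilon)$ one computes $M_w$ explicitly and checks $|\det M_w| \in \{1, 2\}$. Finally, distinguishing $P_1^n$ from $P_2^n$ for $n \geq 3$ is done via a conjugation-invariant — for instance, the structure of $\Ad_{g^{-1}} u_L - u_R$ as a weight system, or more concretely the fact that $u_L$ in $P_2^n$ is a scalar $zI$ (hence central-valued on the torus image in $T_L$), a feature no Weyl/$GL_n(\Z)$ transformation can create from $P_1^n$ when $n \geq 3$. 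For $n = 2$, one writes down an explicit element of $W \times W \times GL_2(\Z)$ interchanging the two families.

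The step I expect to be the main obstacle is the classification, i.e., ruling out intermediate normal forms analogous to the $S_{1,\ell}$, $S_{2,\ell}$ families (with $1 < \ell < m$) appearing in Theorem~\ref{SU(n)tori}. The key phenomenon to exploit is that the sign-change part $(\Z/2)^n$ of $W(\Sp(n))$, absent from $W(\SU(n))$, introduces many additional matrices $M_w$ whose determinants must be $\leq 2$; these extra constraints force the intermediate values of $\ell$ to collapse, leaving only the two extremal cases $P_1^n$ and $P_2^n$. The verification of freeness and the equivalence-for-$n=2$ statement are then routine.
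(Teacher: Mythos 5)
A preliminary remark: these notes state Theorem~\ref{Sp(n)tori} without proof (they are a survey of Eschenburg's Habilitation, to which the proof is deferred), so there is no in-paper argument to compare against line by line. Your setup is nonetheless the standard and correct one: encode $T^n\subset T_L\times T_R$ by weight matrices $A,B$, use the fact that two elements of a maximal torus of $\Sp(n)$ are conjugate iff they lie in the same orbit of $W=S_n\ltimes(\Z/2)^n$, and translate freeness into $|\det(A-A_wB)|=1$ for every $w\in W$ (with the value $2$ only in the ``free modulo ineffective kernel'' variant, where one must additionally check that the order-two kernel element maps to $(-\I,-\I)$). The allowed equivalences you list are also right, except that they are not quite enough for the final claim: $P_1^2$ and $P_2^2$ are \emph{not} related by an element of $W\times W\times GL_2(\Z)$ alone (no signed permutation of the rows of $B=\bigl(\begin{smallmatrix}0&1\\0&-1\end{smallmatrix}\bigr)$ followed by column operations can produce a zero row), and one must also invoke the equivalence $g\mapsto g^{-1}$, which swaps the two factors of $G\times G$.

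The genuine gap is that the heart of the theorem --- the classification itself --- is not carried out but asserted: ``I expect the analysis to yield, after the allowed equivalences, exactly the two normal forms $P_1^n$ and $P_2^n$.'' That sentence is the entire content of the statement being proved. You correctly identify that the sign-change part of $W(\Sp(n))$ supplies many extra determinant constraints absent in the $\SU(n)$ case, and that this is why the $\S_{i,\ell}$-type intermediate families should disappear; but ``the extra constraints force the intermediate values of $\ell$ to collapse'' is a heuristic, not an argument. To close the gap you would need to actually run the normalization: put $B$ into a canonical form under $W\cdot GL_n(\Z)$, show that the conditions $|\det(A-A_wB)|=1$ for the reflections and transpositions already force $A$ (when nonzero) to have rank one with a single nonzero row up to equivalence, and then pin down the two possible shapes of $B$ compatible with such an $A$. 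The verifications you do describe (freeness of $P_1^n$, $P_2^n$ and the $n=2$ coincidence) are indeed routine, but as written the proposal assumes its main conclusion rather than proving it.
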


\bigskip

If we consider  the usual embeddings
            $$U(n) \subset \SO(2n)  \subset \SO(2n+1)
            \quad ,\quad U(n) \subset \Sp(n)$$
for $n \geq 2$, we see that a maximal torus in $\U(n)$ can  also be
viewed as a maximal torus in $\SO(2n), \SO(2n+1)$ or $\Sp(n)$. Thus
free biquotient actions on $\Sp(n)$ give rise to free biquotient
actions on $\SO(2n)$ and $ \SO(2n+1)$, and vice versa. We do not consider
the group $\SO(4)$ since it is not simple.

\smallskip

We record the following special cases:

\begin{cor*} For the rank 2 groups we have:
\begin{itemize}
\item[(a)] The only $2$-torus acting freely on $\SU(3)$ on both sides is:
$$
\S_{1,1}^2=\S_{2,1}^2=\big\{\diag(1,1,z^2w^2) \, ;\,
\diag(z,w,zw)\big\}
$$
\item[(b)] The only $2$-torus acting freely on $\Sp(2)$ on both sides is:
$$
P_1^2=P_2^2=\big\{\diag(z,z) \, ;\,
\diag( w,1 )\big\}
$$
\end{itemize}
\end{cor*}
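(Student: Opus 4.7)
The plan is to derive both parts of the corollary by specializing Theorems \ref{SU(n)tori} and \ref{Sp(n)tori} to the rank two cases $n=3$ and $n=2$, enumerating the allowed subtori, and matching them to the printed forms via explicit reparametrizations and conjugations. For part (a), I apply Theorem \ref{SU(n)tori} with $n=3$. The range $1 \leq \ell \leq n/2 = 3/2$ forces $\ell=1$, so only $\S_{1,1}^2$ and $\S_{2,1}^2$ occur; since the theorem states these are equivalent precisely when $\ell=1$, a single equivalence class remains. Plugging into the formula gives
$$\S_{1,1}^2 = \{(\diag(z^2, 1, 1),\, \diag(z\bar w_1, w_1, z))\}.$$
The reparameterization $(z, w_1) \mapsto (Zw, w)$, which is an automorphism of the parameter torus $\T^2$, rewrites this as $\{(\diag(Z^2 w^2, 1, 1),\, \diag(Z, w, Zw))\}$. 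Conjugating the left $\SU(3)$-factor by a signed-permutation element of $\SU(3)$ that exchanges the first and third coordinates then moves the nontrivial diagonal entry on the left to position three while leaving the right component alone, and renaming $Z$ as $z$ produces exactly the stated form.

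For part (b), I apply Theorem \ref{Sp(n)tori} in the boundary case $n=2$, after verifying that its proof remains valid there. The two candidates become
$$P_1^2 = \{(\diag(1, z),\, \diag(w, \bar w))\} \quad \text{and} \quad P_2^2 = \{(\diag(z, z),\, \diag(w, 1))\},$$
and $P_2^2$ is already the stated form, so what remains is to establish the equivalence $P_1^2 \sim P_2^2$.

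The main obstacle is this last equivalence, since $P_1^2$ and $P_2^2$ are not Weyl-conjugate inside the maximal torus of $\Sp(2)\times\Sp(2)$, so purely toral manipulations do not suffice. My plan is to compose three operations: first, the biquotient equivalence induced by $g \mapsto g^{-1}$, which interchanges the two factors of $U \subset \Sp(2)\times\Sp(2)$ and converts $P_1^2$ into $\{(\diag(w,\bar w),\,\diag(1,z))\}$; second, conjugation of the left factor by $J = \diag(1,j) \in \Sp(2)$, using the identity $j\bar w j^{-1} = w$ for $w \in \U(1) \subset \Sp(1)$ to send $\diag(w,\bar w) \mapsto \diag(w,w)$; and third, conjugation of the right factor by the off-diagonal permutation matrix in $\Sp(2)$, sending $\diag(1,z) \mapsto \diag(z,1)$. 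Renaming parameters then identifies the result with $P_2^2$, completing the proof.
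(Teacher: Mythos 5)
Your derivation is correct and is essentially the paper's intended route: the corollary is stated as an immediate specialization of Theorems \ref{SU(n)tori} and \ref{Sp(n)tori}, and your explicit reparametrizations (the $\mathrm{GL}(2,\Z)$ change of parameters for $\S_{1,1}^2$, and the composition of $g\mapsto g^{-1}$ with the Weyl elements $\diag(1,j)$ and the transposition for $P_1^2\sim P_2^2$) correctly fill in the identifications the paper leaves implicit. The only caveat is the one you already flag: Theorem \ref{Sp(n)tori} is stated for $n\geq 3$, so part (b) formally rests on the classification extending to $n=2$, which the theorem's own ``equivalent if and only if $n=2$'' clause indicates is intended.
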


\bigskip

\ms {\it Remark.} Due to the isomorphism $\Spin(6) = \SU(4)$, we
have, besides the free actions of $P_1^3,P_2^3$ on $\SO(6)$, a third
torus acting freely which can be written as:
$$
P_3^3=\big\{\diag(z,z,z) \, ;\,
\diag( zw_1,w_2,\bar w_1 \bar w_2)\big\}
$$

\bigskip

Finally, for the exceptional Lie groups, we have:

\begin{thm}
\label{exceptori}
        The exceptional Lie groups $G_2, F_4, E_6, E_7, E_8$
        admit no free two-sided torus actions of maximal rank.
\end{thm}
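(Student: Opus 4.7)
After independently conjugating in the two factors, we may assume $T \subset T_0 \times T_0$ for a fixed maximal torus $T_0 \subset G$. Since $t_L, t_R \in T_0$ are conjugate in $G$ precisely when they share an orbit under the Weyl group $W = W(G)$, freeness of the two-sided $T$-action is equivalent to
\[
T \cap \Gamma_w = \{(e,e)\} \quad \text{for every } w \in W,
\]
where $\Gamma_w := \{(w \cdot y, y) \in T_0 \times T_0 : y \in T_0\}$. Write $\tau \subset \ft \oplus \ft$ for the Lie algebra of $T$ and $\gamma_w = \{(wY, Y) : Y \in \ft\}$. The infinitesimal version of the condition is $\tau \cap \gamma_w = 0$ for every $w$. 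After handling the degenerate cases (where at least one of $\pi_L(\tau)$, $\pi_R(\tau)$ is a proper subspace of $\ft$) by a parallel argument, one reduces to the case where $\tau$ is the graph of a linear map $\phi : \ft \to \ft$; rationality of $\tau$ relative to the coroot lattice $\Lambda$ then forces $\phi$ to restrict to an element of $\operatorname{End}_\Z(\Lambda)$, and two-sidedness forces $\phi \neq 0$.

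Promoting from the Lie algebra to the group level: the kernel $T \cap \Gamma_w$ of the map $T \to T_0$, $(t_L, t_R) \mapsto t_L(w \cdot t_R)^{-1}$, is finite (by the Lie-algebra condition), and for it to be trivial rather than merely finite one needs
\[
\det(I - w\phi) = \pm 1 \quad \text{for every } w \in W.
\]
The task thus reduces to showing that no non-zero $\phi \in \operatorname{End}_\Z(\Lambda)$ satisfies this simultaneous system of unimodularity conditions, for any $G \in \{G_2, F_4, E_6, E_7, E_8\}$. The main obstacle is to organize the many determinantal constraints into a conceptual contradiction rather than a brute-force enumeration.

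The approach I would take exploits the fact that $-I \in W$ for the four types $G_2, F_4, E_7, E_8$: pairing $w$ with $-w$ gives
\[
\det(I - w\phi) \det(I + w\phi) = \det(I - (w\phi)^2) = \pm 1
\]
for every $w$, forcing the characteristic polynomial of each $(w\phi)^2$ to be a product of cyclotomic polynomials. Intersecting these constraints as $w$ ranges through $W$ --- in particular invoking the Coxeter element, whose eigenvalues on $\ft$ are the primitive $h$-th roots of unity indexed by the exponents of $G$ --- restricts $\phi$ to a small, explicitly enumerable list of lattice automorphisms; for each candidate one then exhibits a reflection $s \in W$ at which $\det(I - s\phi) \neq \pm 1$. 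The case $E_6$, where $-I \notin W$, is handled analogously after replacing $-I$ by the longest Weyl element $w_0 \in W(E_6)$ (which acts on $\ft$ as $-\sigma$ for the outer diagram involution $\sigma$), using the pairing $w \leftrightarrow w_0 w$ to produce the analogous squaring identity. The smallest case $G_2$ reduces to direct enumeration: $|W(G_2)| = 12$ and $\phi \in M_2(\Z)$, and the conditions at $I$, $-I$, and a $\pi/3$-rotation already suffice to exclude every non-zero $\phi$.
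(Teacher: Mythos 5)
These notes state Theorem~\ref{exceptori} without proof (the argument lives in \cite{E1}), so there is no proof in the paper to compare against line by line; I can only assess your proposal on its own terms. Your first reduction is the right one and is certainly the standard starting point: conjugate $T$ into $T_0\times T_0$, use that two elements of $T_0$ are conjugate in $G$ exactly when they lie in the same Weyl orbit, and convert freeness into a family of unimodularity conditions indexed by $w\in W$. Up to that point the proposal is sound.

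Beyond that there are genuine gaps. (1) Rationality of $\tau$ does \emph{not} force $\phi\in\operatorname{End}_{\Z}(\Lambda)$; it only gives $\phi(\Lambda')\subset\Lambda$ for some finite-index sublattice $\Lambda'\subset\Lambda$ (already $\phi=\tfrac12 I$ defines a genuine subtorus), and then the order of $T\cap\Gamma_w$ involves the index $[\Lambda:\Lambda']$ as well as $\det(\phi-w)$. The correct general condition is $\det\bigl(A-\rho(w)B\bigr)=\pm1$ for a pair of integer matrices $A,B$ describing the two projections $T\to T_0$; your normalization silently assumes $B=I$ and so misses admissible subtori, and the ``degenerate'' cases (e.g.\ product tori $T_1\times T_2$ with both factors nontrivial, which are two-sided and must also be excluded) are not a ``parallel argument'' that can be waved off. (2) The inference from $\det\bigl(I-(w\phi)^2\bigr)=\pm1$ to ``the characteristic polynomial of $(w\phi)^2$ is a product of cyclotomic polynomials'' is a non sequitur: a single condition $\det(I-M)=\pm1$ imposes essentially no constraint on the spectrum of $M$ (e.g.\ $M=2$ in rank one). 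To apply Kronecker's theorem you would first need all eigenvalues of $w\phi$ on the unit circle, and nothing in the argument provides that. (Also, the eigenvalues of a Coxeter element are $e^{2\pi i m_j/h}$ with $m_j$ the exponents, and these are not all primitive $h$-th roots of unity for $E_6$ and $E_7$.) (3) Most importantly, the contradiction is never actually reached: the ``small, explicitly enumerable list'' of surviving $\phi$ is not produced, and the claim that each candidate is killed by some reflection is precisely the assertion of the theorem restated, not proved --- even for $G_2$, where you assert that three conditions suffice, no computation is exhibited. As written this is a reasonable plan of attack with a correct opening reduction, not a proof.
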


\bigskip

\section{Classification of maximal biquotient actions of maximal rank}

We now describe the classification when $U$ is not abelian. Note
that if a maximal torus $T$ acts freely on $G$, then any extension
$U$ of $T$
 with $\rk(U) = \dim T$ will also act freely on $G$.  Eschenburg
  classified all such $U$ which are maximal among these extensions,
  i.e. all $U_{max} \subset G \times G$ such that if $U$ is another
  extension of $T$, then $U$ is contained in some $U_{max}$.  Given
  such a $U_{max}$ and a torus $T \subset U_{max}$, maximal in
  $U_{max}$, it is an simple exercise to list all the extensions
  $U$ with $T \subset U \subset U_{max}$ by using the Borel Siebenthal classification of
  maximal rank subgroups. For example,
if $\T^{n-1}$ is the usual maximal torus in $\SU(n)$, then
extensions $U$ are given by
$$\T^{n-1} \subset \S(\U(n_1) \times \dots \times \U(n_k)) \subset \SU(n),$$
where $\sum n_i = n$. Similarly, for $\T^n$ the usual maximal torus
in $\Sp(n)$, extensions $U$ are given by
$$\T^n \subset U_1 \times \dots \times U_k \subset \Sp(n),$$
where $U_i = \Sp(n_i)$ or $\U(n_i)$, and $\sum n_i = n$.

Finally, if $\T^n$ is the usual maximal torus in $\SO(2n)$ or
$\SO(2n+1)$, extensions $U$ are given by
$$\T^n \subset U_1 \times \dots \times U_k \subset \SO(n) ,$$
where $U_i = \SO(n_i)$ or $\U(n_i/2)$, and $\sum n_i = n$.

\bigskip

We break up the description of all such maximal $U$ into those where
the quotient is diffeomorphic to a rank one symmetric space in Table
\ref{cross}, and those which are not in Table \ref{notcross}.

\renewcommand{\thetable}{\Alph{table}}

\renewcommand{\arraystretch}{1.4}
\begin{table}[!hbtp]
      \begin{center}
          \begin{tabular}{|c||c|c||c|c|c|}
\hline
& $G$ & $n$ & $T_U$ & $U=U_1\x U_2$ & $G \bq U$ \\
\hline
\hline

1 & $\SU(n)$     & $n \geq 5$      & $\S_{1,\ell}, \ \ 2 \leq \ell <
\frac{n}{2}$     &
$\S^1_\ell \ltimes \SU(n-1)$        & $\CP^{n-1}$ \\
\hline

2 & $\SU(2n)$    & $n \geq 2$      & $\S_{1,n}$       & $\Delta \SU(2) \times \SU(2n-1)$      & $\HP^{n-1}$ \\
\hline \hline

3 & $\Spin(7)$     & & $P_1^3$      & $ \Spin(3) \times \G_2$       & $\Sph^4$ \\
\hline

4 & $\Spin(8)$     & & $P_1^4$      & $ \Spin(3) \times \Spin(7)'$       & $\Sph^4$ \\
\hline

5 & $\Spin(9)$     & & $P_1^4$      & $ \Spin(3) \times \Spin(7)'$       & $\HP^3$ \\
\hline

6 & $\SO(2n)$      & $n \geq 3$    & $P_2^n$       & $\Delta \SO(2) \times \SO(2n-1)$      & $\CP^{n-1}$\\
\hline

7 & $\SO(4n)$      & &  $P_2^{2n}$       & $\Delta \SU(2) \times \SO(4n-1)$      & $\HP^{n-1}$\\
\hline \hline

8 & $\Sp(n)$       & $n \geq 2$    & $P_2^n$     & $\Delta \Sp(1) \times \Sp(n-1)$        & $\HP^{n-1}$ \\
\hline

          \end{tabular}
      \end{center}
      \vspace{0.1cm}
      \caption{Maximal rank free actions such that $G \bq U$ is diffeomorphic to a compact rank one symmetric space.}\label{cross}
\end{table}

\begin{table}[!hbtp]
      \begin{center}
          \begin{tabular}{|c||c|c||c|c|}
\hline
& $G$ & $n$ & $\T_U$ & $U=U_1\x U_2$ \\
\hline
\hline

9 & $\SU(n)$     & $n \geq 5$      & $\S_{2,\ell}, \ \ 2 \leq \ell < \frac{n}{2}$
     & $\S^1 \ltimes \SU(\ell)\SU(n-\ell)$  \\
\hline

10 & $\SU(2n)$    & $n \geq 2$      & $S_{2,n}$       & $\S^1 \times \SU(n)\SU(n)$  \\
\hline \hline

11 & $\SO(2n)$     & $n \geq 5$     & $P_1^n$      & $\SO(3) \times \SU(n)$      \\
\hline

12 & $\SO(2n+1)$     & $n \geq 5$     & $P_1^n$      & $\SO(3) \times \SU(n)$    \\
\hline

13 & $\SO(2n+1)$     & $n \geq 3$     & $P_2^n$      & $\Delta \SO(2) \times \SO(2n-1)$  \\
\hline

14 & $\SO(2n)$      & $\left.\begin{array}{c} 2n = p + q \geq 2,\\
p, q \ {\rm odd}\end{array} \right.$         & $P_2^n$
                    & $\Delta \SO(2) \times \SO(p)\SO(q)$\\
\hline

15 & $\SO(4n+1)$      & $n \geq 2$    & $P_2^{2n}$       & $\Delta \SU(2) \times \SO(4n-1)$    \\
\hline \hline

16 & $\Sp(n)$        & $n \geq 3$    & $P_1^n$     & $\Sp(1) \times \SU(n)$    \\
\hline

\hline

17 & $\Sp(4)$       & & $P_1^4$      & $\Sp(1) \times \SU(2)^3$\\
\hline
          \end{tabular}
      \end{center}
      \vspace{0.1cm}
      \caption{Maximal rank free actions such that $G \bq U$ is not diffeomorphic
      to a compact rank one symmetric space.}\label{notcross}
\end{table}

In these Tables, $U=U_1\x U_2$ where $U_1$ is a rank one factor
which, except in case 1 and 9, is embedded only on the left. In all cases $U_2$ acts only on the right.

For case 1, $U$ is a semidirect product $\S^1 \ltimes \SU(n-1)$
where $\SU(n-1)=\{\diag(A,1)\mid A\in \SU(n-1)\}$ acts only on the
right and
$$
\S^1=\big\{\diag(z^2,\dots ,z^2,1, \dots ,1) \, ;\,
\diag( z,z^2\dots,z^2,1,\dots,1,z)\big\}
$$
on both sides. Indeed, this circle subgroup $\S^1\subset G\x G$
clearly normalizes $(\{e\}\x \diag(A,1))$. Similarly for case 9,
where
$$
\S^1=\big\{\diag(1,\dots ,1, z^2) \, ;\,
\diag( z,1\dots,1,z)\big\}.
$$
In case 10 on the other hand, the circle acts only on the left as
$\diag(z,\dots,z,z^{n-1})$.

The diagonal subgroup: $\Delta\SO(2)\subset\SO(2n)$ and
$\Delta\SU(2)\subset\SO(4n)$ act as Hopf actions on
$\Sph^{2n-1}=\SO(2n)/\SO(2n-1)$  respectively
$\Sph^{4n-1}=\SO(4n)/\SO(4n-1)$. Furthermore,
$\Delta\SO(2)\subset\SO(2n+1)$ and $\Delta\SU(2)\subset\SO(4n+1)$
are obtained via first embedding into $\SO(2n)$ respectively
$\SO(4n)$.

In case 11,12, 16,17 the embedding of $U_1$ is the standard block
embedding.

The embedding of $U_2$, which only acts on the right, is the
standard block embedding when $U_2$ is a classical group. In case 3
it is the standard embedding of $\G_2$ in $\SO(7)$, lifted to
$\Spin(7)$ and in case 4 and 5 the spin embedding.

The only other embedding that needs to be described is the embedding
of $U_2=\SU(2)^3\subset\Sp(4)$ in case 17. For this we consider the
representation $\SU(2)^3\subset\SU(8)$ given by the exterior tensor
product of the tautological 2 dimensional representation of $\SU(2)$
on each factor. Since this representation is symplectic, the image
lies in $\Sp(4)$.

\bigskip

We  point out that entry 14 was missing in its full generality in
\cite{E1}, as was observed in \cite{EKS}. Due to this fact it is not
yet certain that his classification is complete. This should be settled in the forthcoming English translation by
Catherine Searle and Jost Eschenburg.

\bigskip

We finally mention his classification in the rank 2 case of all
biquotients, not just the equal rank ones:

\begin{thm}
If $G$ is a simple Lie group of rank 2 and $U$ a rank one group
acting freely as a biquotient, then it is either a homogeneous space, or $U$ is the circle action
on $\,\SU(3)$ whose quotient is the Eschenburg spaces $E^7_{pq}$, or
the Gromoll-Meyer biquotient action of $\,\Sp(1)$ on $\Sp(2)$ (or
its subgroup $\S^1\subset\Sp(1)\;$), or a biquotient action of
$\,\SU(2)$ (respectively $\S^1\subset\SU(2)\;$) on $\G_2$. The
latter acts via the index 3 three dimensional subgroup on the left,
and the index 4 three dimensional one on the right.
\end{thm}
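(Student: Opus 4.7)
The plan is to enumerate all pairs $(G, U)$ with $G \in \{\SU(3), \Sp(2), \G_2\}$ a rank-two simple compact Lie group and $U$ a connected rank-one compact group, so $U$ is one of $\S^1, \SU(2), \SO(3)$. For each candidate embedding $U \hookrightarrow G \times G$ I would first classify it up to conjugation in $G \times G$ by its left and right projections $U_L, U_R \subseteq G$, and then impose the freeness criterion $u_L \neq g u_R g^{-1}$ on a maximal circle of $U$. If either projection is trivial the quotient is homogeneous, so attention focuses on genuinely two-sided embeddings. Note that if $\dim U = 3$ and both projections are non-trivial, a dimension count forces the embedding into $U_L \times U_R$ to be the graph of an isomorphism (up to a central twist); a similar rigidity holds for $U = \S^1$, where conjugation places the image inside $T \times T$ for a maximal torus $T \subset G$.

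In the case $U = \S^1$, the image is parametrized by two weight vectors $(p, q)$. For $G = \SU(3)$ the freeness criterion is exactly the Eschenburg coprimality condition on $(p, q)$, yielding the family $E^7_{p,q}$. For $G = \Sp(2)$, the Corollary in Section 5 shows that every free two-sided rank-two torus is conjugate to $P_1^2 = P_2^2$, and the free sub-circles of it are precisely the $\S^1 \subset \Sp(1)$ inside the Gromoll--Meyer $\Sp(1)$. For $G = \G_2$, Theorem \ref{exceptori} rules out any free two-sided rank-two torus, so one is reduced to analyzing one-dimensional subtori of $T \times T$; a weight computation then forces every free two-sided circle to lie inside the $\SU(2) \times \SU(2)$ formed by the index-$3$ and index-$4$ three-dimensional subgroups in the statement, giving the listed $\S^1$ actions on $\G_2$.

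In the case $U \in \{\SU(2), \SO(3)\}$, I would list the conjugacy classes of three-dimensional connected subgroups of each rank-two simple $G$ and then, for each compatible pair $(U_L, U_R)$, describe the possible graphs $U \hookrightarrow U_L \times U_R$ and test freeness on a maximal circle of $U$, using the preceding $\S^1$ analysis restricted to circles forced to lie in $U_L$ and $U_R$. For $G = \SU(3)$, the only free two-sided sub-circles come from the Eschenburg analysis, and none of them extends to a free $\SU(2)$-action that is not already homogeneous. For $G = \Sp(2)$ the analysis singles out the Gromoll--Meyer pair $(q \mapsto (\diag(q,q), \diag(q,1)))$ as the unique conjugacy class giving a free two-sided $\Sp(1)$. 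For $G = \G_2$ the surviving pairs $(U_L, U_R)$ are exactly, up to switching, the index-$3$ and index-$4$ three-dimensional subgroups.

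The main obstacle is the $\G_2$ step: proving that only the index-$3$/index-$4$ pairing yields a free biquotient. The natural tool is to compare, for each candidate pair $(U_L, U_R)$ and each $g \in \G_2$, the eigenvalues of $u_L$ and $g u_R g^{-1}$ on the seven-dimensional fundamental representation of $\G_2$; the Dynkin-index data encode exactly these eigenvalue patterns and allow one either to exhibit $g$ and $u$ violating freeness (ruling out the pair) or to confirm the absence of such $g$ (yielding a free action on all of the maximal circle of $U$). Once this case is settled, the reduction of every remaining case either to a homogeneous quotient or to one of the three named examples follows mechanically from the $\S^1$ analysis and the list of three-dimensional subgroups.
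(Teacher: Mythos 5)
The paper itself states this theorem without proof --- it is quoted as a summary of Eschenburg's Habilitation --- so your proposal can only be measured against the general machinery the notes set up (the freeness criterion $u_L\neq g u_R g^{-1}$ tested on a maximal torus of $U$, conjugation of abelian subgroups into $T\times T$, and the lists of free maximal tori). Your overall enumeration strategy --- run over $G\in\{\SU(3),\Sp(2),\G_2\}$ and $U\in\{\S^1,\SU(2),\SO(3)\}$, reduce three-dimensional $U$ to graphs over pairs of three-dimensional subgroups $(U_L,U_R)$, and test freeness on a circle in $U$ via Weyl-group conjugacy --- is the right skeleton and is consistent with how the paper treats every example it does work out.

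There is, however, a genuine gap in your $\S^1$ analysis: you classify free circles on $\Sp(2)$ by appealing to the Corollary on free maximal-rank $2$-tori and then taking subcircles of $P_1^2=P_2^2$. A free circle action has no reason to extend to a free action of a $2$-torus, so it need not be a subcircle of any torus on the Corollary's list; the paper's own $\SU(3)$ discussion makes this vivid, since the Eschenburg circles $\S^1_{p,q}$ form an infinite family of free circles while the unique free $2$-torus is the single group $\{(\diag(z,w,zw),\diag(1,1,z^2w^2))\}$, and a circle with pairwise distinct left weights such as $p=(1,2,3)$ cannot be conjugated into it. (You in fact handle $\SU(3)$ correctly by the direct coprimality computation, which makes the $\Sp(2)$ shortcut internally inconsistent with your own method.) Moreover, even granting the reduction, the conclusion you draw is false as stated: $P_1^2$ has free subcircles beyond the Gromoll--Meyer one, e.g.\ $\{(\diag(z,z),\diag(z^2,1))\}$, which is free because $\diag(z,z)$ is conjugate in $\Sp(2)$ to $\diag(z^{\pm2},1)$ or $\diag(1,z^{\pm2})$ only for $z=1$, yet it is not contained in any conjugate of the Gromoll--Meyer $\Sp(1)$. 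To prove the theorem you must therefore (i) classify free two-sided circles in $T\times T$ directly by the gcd conditions coming from the signed-permutation Weyl group, and (ii) supply an argument identifying the quotients of the leftover free actions with homogeneous spaces, since the trichotomy in the statement is ``diffeomorphic to a homogeneous space \emph{or} one of the named actions'' and your proposal has no mechanism for recognizing when a genuinely two-sided free action has a homogeneous quotient. The $\G_2$ step, which you correctly flag as the crux, is also only a plan (compare eigenvalues on the $7$-dimensional representation) rather than an argument, so the exclusivity of the index-$3$/index-$4$ pairing remains unestablished.
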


The biquotient $\Sp(2)/\!/\Sp(1)$ is the famous Gromoll Meyer sphere \cite{GM}, which is homeomorphic but not diffeomorphic to a sphere.
In \cite{KZ} it was shown that $\G_2/\!/\SU(2)$ is homeomorphic to $T_1\Sph^6$, but it is not known if it diffeomorphic to it or not. In the case of $\Sp(2)/\!/\S^1$ and $\G_2/\!/\S^1$ we do not know which 2-sphere bundle over the respective $\SU(2)$ biquotient it is.

In \cite{KS},\cite{K2} a diffeomorphism classification was given of (almost all) Eschenburg spaces $E_{p,q}$ in terms of number theoretic sums. This was used in \cite{CEZ} to study various homeomorphism and diffeomorphism properties of these manifolds. See also \cite{AMP1,AMP2,E4,K1,Sh} for other topological properties.

\providecommand{\bysame}{\leavevmode\hbox
to3em{\hrulefill}\thinspace}


\begin{thebibliography}{10000}



\bibitem[AW]{AW} S.~Aloff and N.~Wallach,
\emph{ An infinite family of 7--manifolds admitting positively
curved Riemannian structures}, Bull. Amer. Math. Soc. {\bf
81}(1975), 93--97.

\bibitem[AMP1]{AMP1} L.~Astey, E. ~Micha and G.~Pastor,
\emph{Homeomorphism and diffeomorphism types of Eschenburg spaces},
Diff. Geom. and its Appl. {\bf 7}(1997), 41--50.


\bibitem[AMP2]{AMP2} L.~Astey, E. ~Micha and G.~Pastor,
\emph{On the homotopy type of Eschenburg spaces with positive
sectional curvature}, Proc. Amer. Math. Soc. {\bf 132} (2004),
3725--3729.


\bibitem[Bo]{Bo}
R.~Bock, \emph{Doppelquotienten ungerader dimension und positive
Schnittkr\"umung}, Dissertation, University of Augsburg, 1998.

\bibitem[CEZ]{CEZ} T.Chinburg-C.Escher-W.Ziller,
 \emph{Topological properties of Eschenburg spaces and 3-Sasakian manifolds},
  Math. Ann. {\bf 339} (2007), 3-20.



\bibitem[E1]{E1} J. H.~Eschenburg,
 {\em Freie isometrische Aktionen auf kompakten Lie-Gruppen
 mit positiv gekr\"ummten Orbitr\"aumen,}
 Schriftenr.~Math.~Inst.~Univ.~M\"unster {\bf 32} (1984).

\bibitem[E2]{E2}
J. H.~Eschenburg, \emph{New examples of  manifolds with strictly
positive curvature}, Invent. Math. \textbf{66} (1982), 469--480.

\bibitem [E3]{E3} J.-H. Eschenburg, \textit{Inhomogeneous spaces of
positive curvature}, Diff.Geom.Appl. \textbf{2} (1992), 123-132.



\bibitem[E4]{E4}
J.-H.~Eschenburg, \emph{Cohomology of biquotients}, Manuscripta
Math. \textbf{75} (1992), 151--166.

\bibitem [EKS]{EKS}
J.-H. Eschenburg, A. Kollross and K. Shankar, \textit{Free,
isometric circle actions on compact, symmetric spaces} , Geometriae
Dedicata, \textbf{102} (2003), 35--44.

\bibitem[GM]{GM}
D.~Gromoll and W.~Meyer, \emph{An exotic sphere with nonnegative
  sectional curvature}, Ann. of Math. (2) \textbf{100} (1974), 401--406.

  \bibitem[KZ]{KZ} V.Kapovitch-W.Ziller, \emph{Biquotients with singly generated
rational cohomology},
      Geom. Dedicata 104 (2004), 149-160.

      \bibitem[KS]{KS}
M.~Kreck and S.~Stolz, \emph{Some non diffeomorphic homeomorphic
homogeneous $7$-manifolds with positive sectional curvature},  J.
Diff. Geom.  \textbf{33} (1991), 465--486.




\bibitem[K1]{K1}
B.~Kruggel, \emph{Kreck-Stolz invariants, normal invariants and the
homotopy classification of generalized Wallach spaces}, Quart. J.
Math. Oxford Ser. (2)   \textbf{49} (1998), 469--485.

\bibitem[K2]{K2}
B.~Kruggel, \emph{Homeomorphism and diffeomorphism classification of
Eschenburg spaces},  Quart. J. Math. Oxford Ser. (2) \textbf{56}
(2005), 553--577.

\bibitem[Sh]{Sh}
K.~Shankar, \emph{Strong inhomogeneity of Eschenburg spaces},
Michigan Math. J. \textbf{50} (2002), 125--141.


\end{thebibliography}
\end{document}